\tikzset{
	declare function={
		normcdf(\x,\m,\s)=1/(1 + exp(-0.07056*((\x-\m)/\s)^3 - 1.5976*(\x-\m)/\s));
	}
}
\newtheorem{theorem}{Theorem}[section] % reset theorem numbering for each section
\newtheorem{lemma}[theorem]{Lemma}
\theoremstyle{definition}
\newtheorem{definition}[theorem]{Definition} % definition numbers are dependent on theorem numbers
\newcommand{\Q}{\mathbb{Q}}
\newcommand{\N}{\mathbb{N}}
\newcommand{\R}{\mathbb{R}}
\title{Central Limit Theorems for Martin-L\"of Random Numbers}
\author[1]{Anton Vuerinckx}
\author[1]{Yves Moreau}
\affil[1]{KU Leuven, Department of Electrical Engineering (ESAT), STADIUS Center for     
	Dynamical Systems, Signal Processing and Data Analytics
}
\date{}
\begin{document} 
	
	\maketitle
	
	\begin{abstract}
		We prove two theorems related to the Central Limit Theorem (CLT) for Martin-L\"of Random (MLR) sequences. Martin-L\"of randomness attempts to capture what it means for a sequence of bits to be ``truly random''. By contrast, CLTs do not make assertions about the behavior of a single random sequence, but only on the distributional behavior of a sequence of random variables. Semantically, we usually interpret CLTs as assertions about the collective behavior of infinitely many sequences. Yet, our intuition is that if a sequence of bits is ``truly random'', then it should provide a ``source of randomness'' for which CLT-type results should hold. We tackle this difficulty by using a sampling scheme that generates an infinite number of samples from a single binary sequence. We show that when we apply this scheme to a Martin-L\"of random sequence, the empirical moments and cumulative density functions (CDF) of these samples tend to their corresponding counterparts for the normal distribution. We also prove the well known almost sure central limit theorem (ASCLT), which provides an alternative, albeit less intuitive, answer to this question. Both results are also generalized for Schnorr random sequences. 
	\end{abstract}

	\section{Introduction}
	The concept of a random binary sequence carries different meanings in different fields. In measure-theoretic probability theory, it is often defined as a sequence of independent Bernoulli random variables (with parameter $p=1/2$). With this definition, a random sequence considers in some sense every possible sequence at once, which allows for many different kinds of propositions. Three of these in particular focus on the running sum of the sequence: The strong law of large numbers (SLLN), the law of iterated logarithm (LIL), and the central limit theorem (CLT).
	
	In algorithmic information theory however, being random is a property that an individual binary sequence can possess. Many different definitions of randomness have been proposed and compared (for some examples, see Downey and Hirschfeldt \cite[Chapter 6-7]{Downey_book}). From the beginning, the SLLN and the LIL have played a crucial role as a filter for ``bad'' notions of randomness. For example, the first definition of sequence randomness was one by von Mises \cite{von Mises} (1919). Von Mises' idea was that a sequence was random if all ``reasonably selected'' infinite subsequences satisfy the law of large numbers, but he did not provide a formal definition of what a reasonably selected subsequence actually means. Although Church \cite{Church} would later provide such formalism using the then newly developed computability theory, Ville \cite{Ville} had already shown that for any reasonable formalization, there would always be so-called von Mises-random sequences with some rare property, namely that the proportion of $1$s tends to $1/2$ much faster than expected. In particular, these sequences would not satisfy the LIL. Because of this, von Mises' definition has since been considered too weak to capture ``true'' randomness.
	
	In probability theory, both the SLLN and the LIL are statements of the form ``the set of all sequences satisfying property $P$ has measure 1''. It is therefore possible to check if a given sequence has property $P$. The classic CLT is not of this form, as it is a statement about the convergence of some cumulative distribution functions (CDFs). However, there exists several alternative versions which are applicable to single sequences, the most fundamental of which is the so-called almost sure central limit theorem. In this work, we prove the ASCLT for all Martin-L\"of sequences, and also prove an alternative version of the CLT for individual sequences by generating an infinite number of samples from a single sequence and making statements about the collective behavior of these samples. Our paper starts with the latter in Section \ref{as_section} and then proves the ASCLT for MLR sequences in Section \ref{ASCLT_section}.

	%By proving CLT-like results for MLR sequences, we give a new element in support of the Martin-L\"of-Chaitin thesis, which is an informal argument stating that Martin-L\"of randomness gives an appropriate formalization of the minimal properties that a ``truly random'' binary sequence needs to fulfill; see Delehaye \cite{Delahaye} for an extensive list of arguments in favor of this thesis. 
	%While there are some limitations in the concept of Martin-L\"of randomness (see Downey \cite{Downey_book}), the fact that MLR sequences provide an adequate ``source of randomness'', such that a CLT-like result holds, does further support the Martin-Löf-Chaitin thesis.

	\section{Preliminaries} \label{preliminaries}
	For an easy-to-understand introduction into recursive functions, Martin-L\"of randomness and related concepts, see for example Shen \cite{Shen}.
	
	~
	
	\textbf{Notation and measure theory}
	Unless stated otherwise, a digit or sequence of digits always refers to a \textit{binary} digit $\in \{0,1\}$ or binary sequence of digits. 
	We use $x_i$ or $\omega_i$ to denote a binary digit (i.e., $\omega_i\in \{0,1\}$). Let $2^{\{n\}}$, $2^{<\N}$, and $2^{\omega}$ respectively be the set of all strings of length $n$, the set of all finite strings, and the set of all infinite sequences. 
	Any sequence $\omega$ from one of these sets has an associated real number $0.\omega_1 \omega_2\omega_3\ldots\in [0,1]$ (with all tail bits set to zero from some point on for any finite sequence). This association works essentially in both directions, as for almost all numbers in $[0,1]$ this expansion is unique. Let $x=x_1x_2x_3,\ldots x_n$ be a finite string, then $|x|$ denotes the length of this string, in this case $|x|=n$. The cylinder of $x$, $\Omega_x \subset 2^{\N}$ is the set of all infinite sequences starting with $x$ (having prefix $x$). Recall that the Lebesgue measure $\mu$ on $[0,1]$ coincides with the fair coin-tossing measure, defined by the equality $\mu(\Omega_x)=2^{-|x|}$ for all strings $x$.
	
	~
	
	\textbf{Computability theory} A countable set $A$ (of strings, numbers, $\ldots)$ is called \textit{recursively enumerable} (r.e.) if there is an algorithm which enumerates the elements of $A$. This algorithm may run indefinitely, as long as any given element of $A$ is eventually enumerated.
	
	Informally, we call a function $f$ \textit{recursive} or \textit{computable} if there is an algorithm which computes $f$, meaning that if $a$ is some valid input for $f$, then this algorithm will on input $a$ give $f(a)$ as output. This intuition is sufficient for functions from $\N$ to $\N$, but becomes inadequate when the domain or range of $f$ is an uncountable space. In the literature (see for example \cite{Gacs, Galatolo}), a function $f:A\rightarrow B$ between computable metric spaces $A,B$ is computable the inverse image of ideal balls in $Y$ are recursively enumerable opens in $X$. However, since we limit ourselves to functions $f:2^{\N} \rightarrow\R$, we use the following equivalent definition based on Turing machines (see Soare \cite{Soare_book}): A function $f:2^{\N} \rightarrow\R$ if there is a Turing machine $A$ which takes an oracle $\omega\in 2^{\N}$ and $\epsilon \in \Q^+$ as input and outputs $r\in \Q$ such that $|f(\omega)-r |<\epsilon$.
	
	For a given sequence $(O_{n})$ of computable objects (numbers/functions), we can ask whether this sequence is \textit{uniformly computable} (in $n$), which intuitively means there is some algorithm $A$ such that $A(n)=O_n$. For example, a sequence of reals $(a_n)$ is uniformly computable in $n$ if there is an Turing machine which on input $n$ and rational $\epsilon>0$, outputs $r\in \Q$ such that $|r-a_n|<\epsilon$. 
	
	Lastly, let $a_{1:\infty}$ be a sequence of numbers which converges to some limit $a$, then we say that $a_{1:\infty}$ \textit{converges effectively} to $a$ if there is some algorithm which on any rational input $\epsilon>0$, outputs an $N\in \mathbb{N}$ such that $|a_n-a|<\epsilon$ for any $n\geq N$. Similarly, a sequence $a_{1:\infty}$ \textit{diverges effectively} to $+\infty$ if there is an algorithm which outputs $N$ on input $M$ such that $a_n>M$ for all $n\geq N$.
	
	~
	
	\textbf{Algorithmic randomness} We will consider both Martin-L\"of randomness and the slightly weaker Schnorr randomness: A set $U=\Omega_{x_1}\cup \Omega_{x_2}\cup \ldots$ is \textit{effectively open} if the set of strings $\{x_1,x_2,\ldots\}$ is recursively enumerable. 
	A sequence of effectively open sets $U_{1:\infty}$ is called uniformly r.e. if there is a recursive function $g$ such that $g(i)$ outputs an enumeration $\{x_1,x_2,\ldots\}$ where $U_i=\Omega_{x_1}\cup \Omega_{x_2}\cup \ldots$.
	A Martin-L\"of test is then a uniformly r.e. sequence $U_{1:\infty}$ such that $\lambda(U_i)\leq 2^{-i}$. A Schnorr test is a Martin-L\"of test with the added requirement that the measures $\mu(U_n)$ are uniformly computable in $n$. Finally, a sequence is Martin-L\"of random (MLR), resp. Schnorr random, if it is not contained in the intersection $\bigcap_i U_i$ of any Martin-L\"of, resp. Schnorr, test. 
	\section{An alternative central limit theorem}\label{as_section}
	\subsection{Main theorem and proof}
	
	In this section, we explain and prove an alternative version of the CLT, where infinitely many samples are generated from a single infinite sequence which at infinity resemble samples from the normal distribution. 
	
	Naturally, we will work with functions between the Cantor space and $\R$. Since the fair coin-tossing measure is a probability measure on $2^\N$, these functions can be seen as random variables. In order to avoid confusion when switching between the viewpoints of random variables and functions, we will refer to such function as Sequence-Based Variables, or SBVs for short:
	\begin{definition} A \textit{sequence-based variable} $X$ is a computable function $2^\omega\rightarrow \R$. A pair of computable functions $(I,f)$ is called a \textit{representation} of $X$ if $X=f\circ I$, where $I:2^\N\rightarrow 2^{\{N\}}: (\omega_1,\omega_2,\omega_3,\ldots)\mapsto (\omega_{i_1},\omega_{i_2},\ldots)$, ($N\in \mathbb{N}$ or $\{N\}=\N$) and $f:2^{\{N\}}\rightarrow \mathbb{R}$. The function $I$ in such a representation is referred to as a \textit{selection function} and set of the indices $\{i_1,i_2,\ldots\}$ as the \textit{selected indices}.
	\end{definition}
	A first example of an SBV $X$ is simply the sum of the first $n$ digits of a sequence:
	\[  \underbrace{\omega_1\qquad \omega_2\qquad \omega_3 \qquad \ldots \qquad \omega_n}_{X=\omega_1+\omega_2+\ldots+\omega_n}\qquad \omega_{n+1} \qquad \omega_{n+2} \qquad \ldots. \]
	Any SBV has of course many different representations (we can always add extra inputs to $f$ or change the order of the selected indices). However, one obvious representation for our example above is $(I,f)$ where $I$ simply selects the first $n$ digits in order and $f$ is the function of arity $n$ that adds its inputs. 
	
	As mentioned before, SBVs inherit all concepts related to random variables by applying them to a sequence of i.i.d. Bernoulli RV (as an example, the SBV defined above inherits a binomial distribution $B(n,1/2)$). Hence, we can talk about independent SBVs, the expectation of an SBV, etc. We also have the following properties:
	\begin{itemize}[noitemsep]
		\item If two SBVs have representations such that sets of the selected indices are disjoint, then these SBVs are independent. 
		\item If $X$ is an SBV with representation $(I,f)$ and $f$ has finite arity $n$, then
		\[\mathbb{E}\left[X\right]=\sum_{(\omega_{1},\omega_{2},\ldots,\omega_{n})\in 2^{\{n\}} } \frac{1}{2^n}f(\omega_{1},\omega_{2},\ldots,\omega_{n}), \]
		%\hspace{-10mm}
		which demonstrates that changing the selection function does not change the expectation.
	\end{itemize}
	For an SBV $X$, we will use $X$ to denote both the true SBV (the function) as well as the output of applying $X$ to a specific sequence $\omega$. It will be clear from context whether $X$ must be seen as a function or an output of that function. 
	
	~
	
	Now, we look at appropriate SBVs to generate our samples. Recall the CLT (see for example Feller \cite[p244]{Feller_book}): Let $(B_{n})$ be a sequence of iid (independent and identically distributed) random variables with mean $\mu$ and variance $\sigma^2$ and let $\Phi$ be the CDF of the standard normal distribution (mean $0$ and variance $1$). Letting $S_n=B_1+B_2+\ldots+B_n$, then for any $a\in \mathbb{R}$, 
	\[ P\left( \frac{S_n-n\mu}{\sigma\sqrt{n}}<a \right)\rightarrow \Phi(a),  \] 
	often written as $\frac{S_n-n\mu}{\sigma\sqrt{n}}\xrightarrow{D} N(0,1)$.
	
	In case the $B_i$ are Bernoulli random variables with parameter $p=1/2$, the statement becomes $\frac{2S_n-n}{\sqrt{n}}\xrightarrow{D} N(0,1)$. Based on this, we use the following set of SBVs to generate our samples\footnote{There are of course multiple alternatives to this particular set of SBVs, some of which are briefly discussed in Section \ref{variations_section}.}:
	\begin{align}\label{sampling_scheme}
		\underbrace{\omega_1}_{ X_1=\frac{2\omega_1-1}{\sqrt{1}} }\quad\underbrace{\omega_2\quad \omega_3}_{ X_2=\frac{2(\omega_2+\omega_3)-2}{\sqrt{2}} }\quad \underbrace{\omega_4\quad \omega_5\quad \omega_6}_{ X_3=\frac{2(\omega_4+\omega_5+\omega_6)-3}{\sqrt{3}} }\qquad \underbrace{ \omega_7 \quad \omega_8\quad \omega_9\quad \omega_{10}}_{X_4=\frac{2(\omega_7+\omega_8+\omega_9+\omega_{10})-4}{\sqrt{4}}}\qquad \underbrace{\omega_{11} \quad\ldots}_{X_5=\ldots },
	\end{align}
	always using $n$ digits in the definition of $X_n$. We will act as if these $X_i$ are our random samples from some distribution. Let $\widehat{D}_k$ be the empirical distribution of the first $k$ samples $X_1,\ldots, X_k$ (meaning $\widehat{D}_k$ is a RV whose CDF is the empirical distribution function of $\{X_1,\ldots,X_k \}$). We will show that if $\omega$ is MLR, then the empirical moments of these distributions converge to the corresponding moments of the normal distribution. Formally for any MLR sequence $\omega$, we will show that
	\begin{align}\label{moments_conv}
		\mathcal{E}\left[ \widehat{D}_k^m \right]=\frac{X_1^m+X_2^m+\ldots+X_k^m}{k} \rightarrow \nu_{m}, \qquad \text{as }k\rightarrow\infty, \text{ for all } m\in \mathbb{N}_0,
	\end{align}
	where $\nu_{m}$ denotes the $m$-th moment of the standard normal distribution (see Papoulis \cite[p148]{Papoulis_book}):
	\begin{align}\label{normalmoments}
		\nu_m=\begin{cases}
			0& \text{if } m \text{ is odd},\\
			\displaystyle 
			(m-1)!! & \text{if } m \text{ is even\footnotemark}.
		\end{cases}
	\end{align}
	\footnotetext{$(m-1)!!$ denotes the double factorial: $(m-1)!!=(m-1)\cdot (m-3)\cdot (m-5)\cdot \ldots \cdot 3\cdot 1$.}
	
	We use the notation $\mathcal{E}$ instead of $\mathbb{E}$ to stress that $\mathcal{E}\left[ \widehat{D}_k^m \right]$ is an SBV (its value depends on $\omega$), not a number. 
	
	As shown later, if (\ref{moments_conv}) holds for a given sequence, then the corresponding empirical distribution functions $\widehat{F}_k$ defined as
	\begin{align}\label{emp_dist_func}
		\widehat{F}_k(t)= \frac{\pmb{1}_{\left\{X_1\leq t \right\}} +\pmb{1}_{\left\{X_2\leq t \right\}}+\ldots +\pmb{1}_{\left\{X_k\leq t \right\}}}{k},\qquad t\in \mathbb{R}
	\end{align} 
	will converge uniformly to the CDF of the standard normal distribution. Note that for any $k$ and $t$, $\widehat{F}_k(t)$ is an SBV. These results are summarized in Theorem \ref{main_result}. 
	
	~
	
	To prove the convergence of the moments (\ref{moments_conv}), we use an adapted version of the SLLN due to Kolmogorov. 
	
	\begin{theorem}[Kolmogorov's SLLN for MLR sequences]\label{kolmogorov}
		Let $(X_n)$ be a sequence of independent sequence-based variables, uniformly in $n$. Let $\mu_n$ and $\sigma_n^2$ respectively be the average and variance of $X_n$\footnote{These exist and are uniformly computable: Since $2^{\N}$ is compact and $X_n$ are computable, it is an easy to show that $\int X_n(\omega)\, d\omega$ and $\int X_n^2(\omega)\, d\omega$ are uniformly computable in $n$.}
		Assume the $X_n$ have finite averages $\mu_n$ and variances $\sigma_n^2$. Also assume the following holds
		\begin{enumerate}
			\item The sequences $(\mu_n)$ converges effectively to some $\mu \in\R$.
			\item $\sum_k k^{-2}\sigma_k^2$ is a finite, computable number.
		\end{enumerate}
		Then for all Martin-L\"of random sequences
		\begin{align}\label{converge}
			\bar{X}_n \rightarrow \mu \qquad \text{as }n\rightarrow \infty,
		\end{align}
		where $\bar{X}_n=\frac{1}{n}\sum_{i\leq n}X_i$.
	\end{theorem}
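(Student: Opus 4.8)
The plan is to run the classical proof of Kolmogorov's strong law---Kolmogorov's maximal inequality, the one-series theorem, and Kronecker's lemma---while checking that every probabilistic tail estimate is realised by an \emph{effectively open} set, so that the set of ``bad'' sequences is swallowed by a single Martin-L\"of test.

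First I would pass to the centered variables $Y_k:=X_k-\mu_k$. These are again sequence-based variables, uniformly computable in $k$ (the $\mu_k$ are uniformly computable by the footnote), they are independent with mean $0$ and variance $\sigma_k^2$, and one has the identity $\bar X_n=\frac1n\sum_{k\le n}Y_k+\frac1n\sum_{k\le n}\mu_k$. Since $\mu_k\to\mu$, the Ces\`aro averages $\frac1n\sum_{k\le n}\mu_k$ tend to $\mu$ for \emph{every} $\omega$, so it suffices to show $\frac1n\sum_{k\le n}Y_k\to0$ for every Martin-L\"of random $\omega$. By Kronecker's lemma (a deterministic, pointwise fact) this follows once we prove that the series $T_n(\omega):=\sum_{k\le n}Y_k(\omega)/k$ converges for every Martin-L\"of random $\omega$.

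Now I would build the test. The partial sums $T_n$ are finite sums of the $Y_k/k$, hence computable functions $2^\omega\to\R$ uniformly in $n$; they have independent mean-zero increments with $\mathrm{Var}(T_n)=\sum_{k\le n}\sigma_k^2/k^2\le V$, where $V=\sum_k\sigma_k^2/k^2$ is by hypothesis a \emph{finite computable} number. Because $V$ is computable and the partial sums $\sum_{k\le M}\sigma_k^2/k^2$ are uniformly computable and increase to $V$, the tails $R_M:=\sum_{k>M}\sigma_k^2/k^2$ tend to $0$ effectively: to produce $M$ with $R_M<\epsilon$ one computes a rational $q$ with $|V-q|<\epsilon/3$ and searches for $M$ with $\sum_{k\le M}\sigma_k^2/k^2>q-\epsilon/3$, whence $R_M<2\epsilon/3<\epsilon$. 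Kolmogorov's maximal inequality, applied to the increments $Y_{M+1}/(M+1),\dots,Y_N/N$ and then letting $N\to\infty$, yields for every rational $c>0$ the estimate $P\bigl(\sup_{n>M}|T_n-T_M|>c\bigr)\le R_M/c^2$. For each $i$ I then choose, effectively, indices $M_{i,j}$ ($j\ge1$) with $4^{j}R_{M_{i,j}}\le 2^{-i-j}$, and set
\[
U_i \;=\; \bigcup_{j\ge1}\ \bigcup_{n>M_{i,j}}\ \bigl\{\,\omega:\ |T_n(\omega)-T_{M_{i,j}}(\omega)|>2^{-j}\,\bigr\}.
\]
Each set $\{\,|T_n-T_{M_{i,j}}|>2^{-j}\,\}$ is effectively open uniformly in its parameters since $T_n$ is uniformly computable, so $(U_i)$ is uniformly r.e.; by the maximal inequality and the choice of $M_{i,j}$, $\lambda(U_i)\le\sum_{j\ge1}4^{j}R_{M_{i,j}}\le\sum_{j\ge1}2^{-i-j}=2^{-i}$, so $(U_i)$ is a Martin-L\"of test. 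If $\omega$ is Martin-L\"of random then $\omega\notin U_i$ for some $i$, which means $|T_n(\omega)-T_{M_{i,j}}(\omega)|\le2^{-j}$ for all $j\ge1$ and all $n>M_{i,j}$; hence $(T_n(\omega))$ is Cauchy and converges. Together with the reduction above this gives $\bar X_n(\omega)\to\mu$.

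The probabilistic inputs (Kolmogorov's inequality, Kronecker's lemma, Ces\`aro convergence) are standard; the part that needs care is the effectivity bookkeeping in the last step---checking that $\omega\mapsto T_n(\omega)$ is computable uniformly in $n$ so that the building blocks of $U_i$ form a uniformly r.e. sequence of effectively open sets, and that the cutoffs $M_{i,j}$ are genuinely computable. The latter is the only place where the hypothesis that $\sum_k\sigma_k^2/k^2$ is \emph{computable} (not merely finite) is used, and I expect assembling and verifying the test to be the main obstacle; the rest is a transcription of the classical argument.
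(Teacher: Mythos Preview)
Your proof is correct and follows the classical ``one-series theorem + Kronecker's lemma'' route to Kolmogorov's SLLN, whereas the paper takes the closely related H\'ajek--R\'enyi route: it starts from the maximal inequality
\[
P\Bigl(\max_{M\le k\le N}\tfrac1k|T_k|>t\Bigr)\le t^{-2}\Bigl[M^{-2}\sum_{k\le M}\sigma_k^2+\sum_{M<k\le N}k^{-2}\sigma_k^2\Bigr]
\]
(with $T_k=\sum_{n\le k}(X_n-\mu_n)$, citing Sen), shows the bound goes to $0$ effectively in $M$, and builds the test directly on the events $\{\sup_{k\ge M}|\bar X_k-\mu|>\epsilon\}$ after splitting $|\bar X_k-\mu|\le|\bar X_k-\bar\mu_k|+|\bar\mu_k-\mu|$. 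You instead build the test on Cauchy oscillations of the series $\sum_k Y_k/k$ and push the passage from that series to $\bar X_n$ into the deterministic Kronecker and Ces\`aro steps. Both arguments rest on the same maximal-inequality technology and the same tail series $R_M=\sum_{k>M}\sigma_k^2/k^2$; the packaging is different. One small dividend of your arrangement: because your Martin-L\"of test involves only the centered variables $Y_k$, you never actually invoke hypothesis~1 (effective convergence of $\mu_n$)---ordinary convergence $\mu_n\to\mu$, together with the uniform computability of the $\mu_n$ from the footnote, suffices. The paper remarks that its effectivity hypotheses may be stronger than necessary; your argument shows that hypothesis~1 can indeed be relaxed.
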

	The proof of this theorem can be found at the end of this section. 
	The proof of the original statement by Kolmogorov can be found in Sen \cite[p67]{Sen}, where of course conditions \textit{1} and \textit{2} were not present. These conditions are necessary to make an effective version of the proof in \cite{Sen}, but are perhaps not required for the theorem to hold. This paper does not make any attempts to answer whether or not they are indeed required.
	
	Another minor difference: Usually, the condition $\mu_n\rightarrow \mu$ is not present and the theorem simply states that
	\begin{align}\label{converge_original}
		\bar{X}_n-\bar{\mu}_n \rightarrow 0 \qquad \text{a.s. as }n\rightarrow \infty,
	\end{align}
	where $\bar{\mu}_n=\frac{1}{n}\sum_{i\leq n}\mu_i$. It is an easy exercise to show that (\ref{converge}) and (\ref{converge_original}) are equivalent when $\mu_n \rightarrow \mu$ and that the convergence of the $(\bar{\mu}_n)$ is effective when the $(\mu_n)$ converge effectively.
	
	~
	
	The conditions  are necessary to make an effective version of the proof by Shen. It might be possible to weaken these conditions.
	
	~
	
	We will, for any $m\in \mathbb{N}_0$, apply Theorem \ref{kolmogorov} to the sequence $X_1^m, X_2^m,\ldots$ where the $X_i$ are those defined in (\ref{sampling_scheme}) to show that (\ref{moments_conv}) holds. However, before we can apply the theorem, we need some knowledge on the expectation and variance of the $X_i^m$.
	
	Note that the SBVs in (\ref{sampling_scheme}) can also be defined by first transforming the original sequence $\omega$ into a Rademacher sequence $r$ by the equation $r_i=2\omega_i-1$ (replacing any $0$s with $-1$s) and then defining 
	\begin{align}\label{alternative}
		\underbrace{r_1}_{ X_1=\frac{r_1}{\sqrt{1}} }\qquad\underbrace{r_2\qquad r_3}_{ X_2=\frac{r_2+r_3}{\sqrt{2}} }\qquad \underbrace{r_4\qquad r_5\qquad r_6}_{ X_3=\frac{r_4+r_5+r_6}{\sqrt{3}} }\qquad \underbrace{ r_7 \qquad r_8\qquad r_9\qquad r_{10}}_{X_4=\frac{r_7+r_8+r_9+r_{10}}{\sqrt{4}}}\qquad \underbrace{r_{11} \qquad\ldots}_{X_5=\ldots }.
	\end{align}
	
	Hence, we use the following lemma, which will allow us to use Theorem \ref{kolmogorov}.
	
	\begin{lemma}\label{rad_lemma}
		Let $r_i$ be iid Rademacher distributed and define $S_n=r_1+r_2+\ldots +r_n$. Then for any fixed $m\geq 1$,
		\begin{align*}
			\mathbb{E}[S_n^m]=\begin{cases}
				0& \text{if } m \text{ is odd},\\
				\displaystyle 
				(m-1)!!\cdot n^{m/2}+\mathcal{O}\left(n^{\frac{m}{2}-1}\right) & \text{if } m \text{ is even}.
			\end{cases}
		\end{align*}
	\end{lemma}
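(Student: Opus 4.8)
The plan is to compute $\mathbb{E}[S_n^m]$ directly by expanding the power and using that a Rademacher variable satisfies $\mathbb{E}[r_i^k]=1$ when $k$ is even and $\mathbb{E}[r_i^k]=0$ when $k$ is odd. Writing $S_n^m=\sum_{i_1,\dots,i_m=1}^{n} r_{i_1}\cdots r_{i_m}$ and taking expectations, independence of the $r_i$ gives
\[
\mathbb{E}[S_n^m]=\#\bigl\{(i_1,\dots,i_m)\in\{1,\dots,n\}^m : \text{every value occurring among }i_1,\dots,i_m\text{ occurs an even number of times}\bigr\}.
\]
For odd $m$ this set is empty, since a sum of even numbers cannot equal $m$; hence $\mathbb{E}[S_n^m]=0$. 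For even $m=2\ell$, I would classify the contributing tuples by the number $j$ of distinct values they use: since each such value occurs at least twice and the occurrences sum to $2\ell$, necessarily $j\le\ell$.

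Next I would isolate the top term $j=\ell$, which forces each of the $\ell$ distinct values to occur exactly twice. Such a tuple corresponds bijectively to a pair $(M,\phi)$, where $M$ is a perfect matching of the position set $\{1,\dots,m\}$ into $\ell$ pairs (there are $(m-1)!!$ of these) and $\phi$ is an injection from the $\ell$ blocks of $M$ into $\{1,\dots,n\}$ (there are $n(n-1)\cdots(n-\ell+1)$ of these). So the $j=\ell$ contribution is $(m-1)!!\cdot n(n-1)\cdots(n-\ell+1)$, and since $m$ (hence $\ell$) is fixed, expanding the falling factorial gives $(m-1)!!\,n^{m/2}+\mathcal{O}(n^{m/2-1})$ with an implied constant depending only on $m$.

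Then I would bound the remaining terms, those with $1\le j\le\ell-1$. For each such $j$, a contributing tuple is determined by an ordered partition of $\{1,\dots,m\}$ into $j$ blocks of even size — a number of possibilities that is some constant $c_{m,j}$ independent of $n$ — together with an injection of the blocks into $\{1,\dots,n\}$, of which there are at most $n^{j}\le n^{\ell-1}$. Hence all terms with $j\le\ell-1$ together contribute at most $\bigl(\sum_{j=1}^{\ell-1}c_{m,j}\bigr)n^{\ell-1}=\mathcal{O}(n^{m/2-1})$. Adding this to the $j=\ell$ term yields the claimed asymptotic. The part requiring the most care is the error bookkeeping: one must verify that having at least two occurrences of each of $j$ values summing to $2\ell$ really does force $j\le\ell$, with equality only in the all-pairs case, and that the various position-partition counts are constants in $n$ so that they may legitimately be absorbed into $\mathcal{O}(\cdot)$. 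Beyond that, everything is a routine multinomial expansion; in particular no analytic input (not even the CLT itself) is needed.
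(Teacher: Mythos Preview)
Your argument is correct and is essentially the same as the paper's: both expand $S_n^m$, keep only the terms where every index occurs with even multiplicity, single out the all-pairs case as the $n^{m/2}$ contribution, and bound the remaining shapes by $\mathcal{O}(n^{m/2-1})$. The only difference is cosmetic---the paper phrases the count via the multinomial theorem and partitions of $m$ into even parts, whereas you count $m$-tuples directly and group by the number of distinct values---but the two bookkeepings are in bijection and yield the identical leading term $(m-1)!!\cdot n(n-1)\cdots(n-m/2+1)$.
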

	
	The proof of this lemma is given at the end of this section, as we first demonstrate how it applies to our situation. Letting $X_i$ as always denote the SBVs in (\ref{sampling_scheme}) (and alternatively (\ref{alternative})), 
	\begin{align}\label{expectationSBV}
		\mathbb{E}\left[X_n^m\right]=\mathbb{E}\left[\left(\frac{r_1+\ldots+r_n}{\sqrt{n}}\right)^m\right]=\begin{cases}
			0& \text{if } m \text{ is odd},\\
			\displaystyle 
			(m-1)!!+\mathcal{O}\left(\frac{1}{n}\right) & \text{if } m \text{ is even}.
		\end{cases}
	\end{align} 
	
	Hence, we can see that these expectations converge to the moments of the standard normal distribution. This fact alone can be proven much more easily, but (\ref{expectationSBV}) also gives a computable upper bound on the error, which which implies that the convergences are effective. Using this expression, it can also be seen that the variance $\text{Var}\,X_n^m$ converges to $(2m-1)!!$ if $m$ is odd and to $(2m-1)!!-[(m-1)!!]^2$ if $m$ is even. In particular, the variances remain bounded. This implies that $\sum_k k^{-2}\text{Var}\,X_k^m <\infty$ and that this sum is computable, since we have an upper bound for the size of the tail. Thus, we may apply Theorem \ref{kolmogorov} to the sequence $(X_{n}^m)$, which finally implies that (\ref{moments_conv}) holds for all MLR sequences. 
	
	~
	
	Next, consider one of the sequences $\omega$ for which (\ref{moments_conv}) holds. We show that for such a sequence the empirical distribution function converges uniformly to $\Phi$. This result follows directly from the following theorem. 
	
	\begin{theorem}\label{method_of_moments}
		The following statements hold
		\begin{enumerate}[noitemsep]
			\item The standard normal distribution is completely determined by its moments, meaning that if some r.v. $X$ has the same moments as the standard normal distribution, then $X\sim N(0,1)$. 
			\item  Suppose that the distribution of $X$ is determined by its
			moments, that the $X_n$ have moments of all orders, and that $\lim_n \mathbb{E}[X_n^m] =
			\mathbb{E}[X^m]$ for $m = 1, 2, \ldots$. Then $X_n \xrightarrow{D} X$.
			\item Let $X_n,X$ be random variables such that $X_n \xrightarrow{D} X$ and let $F_n,F$ respectively denote their CDF. If $F$ is continuous, then
			\[ \sup_{x}|F_n(x)-F(x)|\rightarrow 0 \qquad \text{as }n\rightarrow \infty. \]
		\end{enumerate}
	\end{theorem}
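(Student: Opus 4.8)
All three parts are classical; I would prove them in the order (1), (3), (2), since part (2) invokes part (1) (and in this paper the theorem is only ever applied with $X\sim N(0,1)$). For part (1), I would establish determinacy via the moment generating function: for $X\sim N(0,1)$ one has $\mathbb{E}[e^{t|X|}]\le 2e^{t^2/2}<\infty$ for every $t$, so all moments are finite and the even moments $\nu_{2m}=(2m-1)!!$ satisfy Carleman's condition $\sum_{m\ge1}\nu_{2m}^{-1/(2m)}=\infty$ (indeed $\nu_{2m}^{1/(2m)}=\Theta(\sqrt m)$, so the series diverges like $\sum m^{-1/2}$). By the standard determinacy criterion for the Hamburger moment problem (see, e.g., Feller vol.~II \cite{Feller_book} or Billingsley's \emph{Probability and Measure}), a law satisfying Carleman's condition is the unique law with its moment sequence, so any $X$ with moments $(\nu_m)$ is $N(0,1)$.

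For part (3) (Pólya's theorem), convergence in distribution gives $F_n(x)\to F(x)$ at every continuity point of $F$, hence at every $x$ since $F$ is continuous. As $F$ is monotone, continuous, with limits $0$ and $1$ at $\mp\infty$, for $\epsilon>0$ I can choose finitely many points $t_1<\dots<t_L$ with $F(t_1)<\epsilon$, $F(t_L)>1-\epsilon$, and $F(t_{j+1})-F(t_j)<\epsilon$. Taking $n$ large enough that $|F_n(t_j)-F(t_j)|<\epsilon$ for all $j$ (finitely many points), monotonicity of $F_n$ and $F$ sandwiches $F_n(x)$ between $F(x)-2\epsilon$ and $F(x)+2\epsilon$ on each interval $[t_j,t_{j+1}]$, and similarly on the two tails; hence $\sup_x|F_n(x)-F(x)|\le 2\epsilon$ for large $n$.

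For part (2) (the Fréchet–Shohat method of moments), the hypothesis $\mathbb{E}[X_n^2]\to\mathbb{E}[X^2]$ gives $\sup_n\mathbb{E}[X_n^2]<\infty$, so $(X_n)$ is tight by Chebyshev. Any subsequence of $(X_n)$ then has, by Helly's selection theorem, a further subsequence $(X_{n_k})$ with $X_{n_k}\xrightarrow{D}Y$ for some $Y$. For each fixed $m$, $\sup_k\mathbb{E}[X_{n_k}^{2m}]<\infty$ makes $(X_{n_k}^m)_k$ uniformly integrable, so $\mathbb{E}[Y^m]=\lim_k\mathbb{E}[X_{n_k}^m]=\mathbb{E}[X^m]$ for all $m$; since $X$ is determined by its moments, $Y\stackrel{d}{=}X$. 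Thus every subsequence of $(X_n)$ has a further subsequence converging in distribution to the same limit $X$, which forces $X_n\xrightarrow{D}X$.

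Parts (1) and (3) are essentially bookkeeping once the right classical statement is quoted; the one genuinely delicate step is the upgrade ``$X_{n_k}\xrightarrow{D}Y$ together with $\sup_k\mathbb{E}[X_{n_k}^{2m}]<\infty$ implies $\mathbb{E}[X_{n_k}^m]\to\mathbb{E}[Y^m]$'' used in part (2). I would handle it the usual way: bound the tail contribution by $\mathbb{E}[|X_{n_k}|^m\mathbf{1}_{\{|X_{n_k}|>R\}}]\le R^{-m}\sup_k\mathbb{E}[X_{n_k}^{2m}]$ to get uniform integrability of $(X_{n_k}^m)$, then combine with the Skorokhod representation (or a direct truncation estimate) to pass to the limit in the moments. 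No effectivity is needed anywhere in this theorem; it is a purely classical input feeding the effective arguments of the earlier sections.
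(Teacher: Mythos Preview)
Your proposal is correct, and in fact it supplies more than the paper itself does: the paper does not prove Theorem~\ref{method_of_moments} at all but simply cites Billingsley \cite[Ex.~30.1 \& Thm~30.2]{Billingsley_book} for parts~(1) and~(2) and Chow \cite[p.~260]{Chow_book} for part~(3). Your sketches---Carleman's condition for the normal moments, the Fr\'echet--Shohat argument via tightness plus Helly selection and uniform integrability of powers, and the standard grid proof of P\'olya's theorem---are precisely the textbook proofs those references contain, so in substance your route coincides with what the paper defers to. One small organizational remark: part~(2) as stated does not invoke part~(1); it is a general assertion about any moment-determined $X$, and part~(1) enters only afterwards in the paper when Theorem~\ref{method_of_moments} is applied with $X\sim N(0,1)$.
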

	
	Indeed, for such a fixed sequence, the empirical distributions $\widehat{D}_n$ form a sequence of random variables whose moments converge to those of the normal distribution. Hence, we can apply Theorem \ref{method_of_moments} to find that $\widehat{F}_n\rightarrow\Phi$ uniformly for all MLR sequences $\omega$, where 
	$\widehat{F}_n(t)$ is understood as the output of the SBV $\widehat{F}_n(t)$ on input $\omega$.
	The proof of Statements \textit{1} and \textit{2} can be found in Billingsley \cite[Ex. 30.1 \& Thm 30.2]{Billingsley_book}. Statement \textit{3} is proven in Chow \cite[p260]{Chow_book}. An alternative proof of the pointwise convergence of $\widehat{F}_n(t)$ can be found by applying Theorem \ref{kolmogorov} to the sequence $\pmb{1}_{\left\{X_k\leq t \right\}}$. We summarize these results in the following theorem.
	
	\begin{theorem}\label{main_result}
		Let $\omega$ be a Martin-L\"of random sequence and define the sequence $(X_n)$ as in \eqref{sampling_scheme}.
		Let $\widehat{D}_k$ be the random variable assigning probability $1/k$ to any outcome $X_i,i=1,\ldots,k$ and let $\widehat{F}_k$ be the corresponding CDF. Then $\widehat{D}_k$ approximates the normal distribution in the following ways:
		\begin{enumerate}[noitemsep]
			\item  Let $\nu_m$ denote the $m$-th moment of the standard normal distribution. Then for every $m\in \N_0$:
			\[ \mathbb{E}\left[\widehat{D}_k^m\right]=\frac{X_1^m+X_2^m+\ldots+X_k^m}{k}\rightarrow \nu_m \qquad \text{ as }k\rightarrow \infty. \]
			In other words, all moments of $\widehat{D}_k$ converge to the corresponding moments of the standard normal distribution.
			\item The CDFs $\widehat{F}_k$ converge uniformly to the CDF $\Phi$ of the standard normal distribution, i.e.
			\begin{align*}
				\sup_{t}|\widehat{F}_k(t)-\Phi(t)|\rightarrow 0 \qquad \text{as }k\rightarrow \infty.
			\end{align*}
		\end{enumerate}
	\end{theorem}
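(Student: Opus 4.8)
The plan is to obtain both statements by assembling results already in hand: Statement \textit{1} is a direct application of Theorem \ref{kolmogorov} to the sequence of $m$-th powers $(X_n^m)$, and Statement \textit{2} then follows by feeding Statement \textit{1} into Theorem \ref{method_of_moments}. Essentially all of the analytic content has been set up in the discussion preceding the theorem, so the proof is a matter of checking that the hypotheses of Theorem \ref{kolmogorov} hold in each case and then invoking the method of moments together with P\'olya's theorem.

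For Statement \textit{1}, fix $m\in\N_0$; the case $m=0$ is trivial, since then every summand equals $1$ and $\nu_0=1$. I would apply Theorem \ref{kolmogorov} to the sequence of SBVs $(X_n^m)_{n\geq 1}$, verifying its hypotheses as follows. The $X_n$ are built from pairwise disjoint, uniformly describable blocks of coordinates of $\omega$ (block $n$ has length $n$), so by the independence property of SBVs stated in the Preliminaries the $X_n$, and hence the $X_n^m$, are independent uniformly in $n$; they have finite averages and variances since each has finite range. Effective convergence of the averages $\mu_n:=\mathbb{E}[X_n^m]$ to $\mu:=\nu_m$ is exactly \eqref{expectationSBV}: for odd $m$ one has $\mu_n\equiv 0=\nu_m$, and for even $m$ the error is $\mathcal{O}(1/n)$ with a constant that can be read off from Lemma \ref{rad_lemma}, so on input $\epsilon$ one can output a threshold $N$ past which $|\mu_n-\nu_m|<\epsilon$. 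Finally, as noted after \eqref{expectationSBV}, the variances $\sigma_n^2=\mathrm{Var}(X_n^m)$ are uniformly bounded (they converge to $(2m-1)!!$ or to $(2m-1)!!-[(m-1)!!]^2$), and because Lemma \ref{rad_lemma} gives a computable bound on $\sigma_n^2$ it also gives a computable bound on the tail $\sum_{k>K}k^{-2}\sigma_k^2$, making $\sum_k k^{-2}\sigma_k^2$ a finite computable number. Theorem \ref{kolmogorov} then yields $\tfrac1k\sum_{i\leq k}X_i^m\to\nu_m$ for every MLR $\omega$; since this holds for each $m$ and for \emph{every} MLR sequence — there is no exceptional null set to take a countable union over — a single MLR $\omega$ satisfies the convergence simultaneously for all $m\in\N_0$.

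For Statement \textit{2}, fix an MLR sequence $\omega$. By Statement \textit{1} the discrete random variables $\widehat{D}_k$ (each supported on the finite set $\{X_1,\dots,X_k\}$, hence possessing moments of all orders) satisfy $\mathbb{E}[\widehat{D}_k^m]\to\nu_m=\mathbb{E}[Z^m]$ for every $m$, where $Z\sim N(0,1)$. By Statement \textit{1} of Theorem \ref{method_of_moments} the law of $Z$ is determined by its moments, so Statement \textit{2} of that theorem gives $\widehat{D}_k\xrightarrow{D}Z$, and since $\Phi$ is continuous, Statement \textit{3} (P\'olya's theorem) upgrades this to $\sup_t|\widehat{F}_k(t)-\Phi(t)|\to 0$. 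As an aside, pointwise convergence $\widehat{F}_k(t)\to\Phi(t)$ can also be obtained directly by applying Theorem \ref{kolmogorov} to the bounded SBVs $\pmb{1}_{\{X_k\leq t\}}$, but upgrading that to uniform convergence still requires a monotonicity/compactness argument, so routing through the method of moments is the cleaner path.

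The only part that is not purely routine is making the $\mathcal{O}$-bounds inherited from Lemma \ref{rad_lemma} genuinely \emph{effective}: one needs the implied constants, as explicit functions of the fixed $m$, to be computable, so that "the averages converge effectively" and "$\sum_k k^{-2}\sigma_k^2$ is a computable number" are literally (not merely vacuously) true and Theorem \ref{kolmogorov} applies. I expect this bookkeeping — tracking the constants through the expansion of $\mathbb{E}[S_n^m]$ — to be the main obstacle; everything else is assembly of theorems already stated.
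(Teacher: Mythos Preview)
Your proposal is correct and follows essentially the same route as the paper: Statement~\textit{1} via Theorem~\ref{kolmogorov} applied to $(X_n^m)$ with hypotheses checked through \eqref{expectationSBV} and the boundedness of the variances, then Statement~\textit{2} by feeding the moment convergence into the three parts of Theorem~\ref{method_of_moments}. Even your aside about recovering pointwise convergence of $\widehat{F}_k(t)$ by applying Theorem~\ref{kolmogorov} to the indicators $\pmb{1}_{\{X_k\leq t\}}$ mirrors the paper's own remark, and your flagging of the effective $\mathcal{O}$-constants as the only non-routine bookkeeping is apt.
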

	
		An alternative way to prove Theorem 3.5 is to consider the sequence $(X_k)$ as the image of a map $f:2^\N \rightarrow \R^\N:\omega\mapsto (X_k)$. If we then endow the space of real sequences $\R^\N$ with the push-forward measure $\mu_f$ defined as $\mu_f(A)=\mu(f^{-1}(A))$ (with $\mu$ the Lebesgue measure on $2^\N$), we can apply \cite[Theorem 3.9]{Hoyrup2} which says that if $x$ is ML-random, so is $f(x)$. Theorem \ref{main_result} then reduces to proving that a ML-random sequence $(X_k)$ in this $\mathbb{R}^\N$ has the property that $\mathbb{E}\left[\widehat{D}_k^m\right]$ as defined above converges to $\nu_m$ for all $m\in \mathbb{N}$. Because of the nature of the measure $\mu_f$, this would likely require proving a statement similar to Theorem \ref{kolmogorov}.

	~
	
	Theorem \ref{kolmogorov} and, therefore, Theorem \ref{main_result} also hold for Schnorr random sequences: When looking at the proof of Theorem \ref{kolmogorov} (see below), the measures of the $K_{M}^{\epsilon}$ are computable since 
	\[ K_{M}^{\epsilon}= \left\{ \omega \mid \sup\limits_{ M\leq k\leq N  }\, |\bar{X}_k-\mu|>\epsilon \right\}\cup\left\{ \omega \mid \sup\limits_{ k\geq N  }\, |\bar{X}_k-\mu|>\epsilon \right\} \]
	for any $N\geq M$, which splits $K_{M}^{\epsilon}$ into a set described by a finite amount of strings, and a set whose measure effectively tends to $0$ as $N\rightarrow \infty$. The measures of the $U_n=\cup_{l=1}^\infty K_{M(n,l)}^{2^{-l}}$ are then also uniformly computable since the measure of the tail $\cup_{l=L}^\infty K_{M(n,l)}^{2^{-l}}$ is bounded by $2^{-n-L+1}$.
	
	~
	
	We end this section with the proofs of Theorem \ref{kolmogorov} and Lemma \ref{rad_lemma}.
	\begin{proof}[Proof of Theorem \ref{kolmogorov}, based on Sen \cite{Sen}]
		Let $T_k=\sum_{n\leq k}(X_n-\mu_n)$ and $T_0=0$. We start from the following inequality (see Sen \cite[Eq 2.3.65]{Sen}):
		\begin{align}\label{derivedineq}
			P\left(\max\limits_{M\leq k\leq N}\,\frac{1}{k} |T_k|>t\right)\leq \frac{1}{t^2}\left[ \frac{1}{M^2}\left( \sum_{k=1}^{M}\sigma_k^2 \right)+\sum_{k=M+1}^{N} k^{-2}\sigma_k^2\right] .	
		\end{align}
		Let for any $k\geq 1$ $D_k=\sum_{n\geq k}n^{-2}\sigma_n^2$. Note that by assumption, $D_1=\sum_{n}n^{-2}\sigma_n^2<\infty$ and hence, $(D_k)$ is a bounded decreasing sequence with $\lim_kD_k=0$ (since $D_1$ is computable, this convergence is effective). Also,
		\begin{align*}
			\frac{1}{M^2}\sum_{k=1}^{M}\sigma_k^2&=\frac{1}{M^2}\sum_{k=1}^{M}k^2[D_k-D_{k+1}]\leq \frac{1}{M^2}\sum_{k=1}^{M}(2k-1)D_k.
		\end{align*}
		Since $M^{-2}\sum_{k=1}^{M}(2k-1)\rightarrow 1$ and $D_k\rightarrow 0$, it is easy to show that the rhs (right-hand side) converges to $0$ (and does so effectively since $D_k$ also converges effectively to $0$). Hence, the lhs converges effectively to $0$. Finally, we can show that $\bar{X}_k\rightarrow \mu$ for all MLR sequences. Fix $\epsilon>0$ and note that by the triangle inequality, $|\bar{X}_k-\mu|>\epsilon$ implies that either $|\bar{X}_k-\bar{\mu}_k|> \epsilon/2$ or $|\bar{\mu}_k-\mu|> \epsilon/2$. As mentioned before, it is easy to show that $\bar{\mu}_k\rightarrow\mu$ and that this convergence is effective if and only if the convergence of $(\mu_n)$ is effective. Since the latter is part of our assumptions, we can for any $\epsilon$, find $M_0$ such that $|\bar{\mu}_k-\mu|< \epsilon/2$ for all $k\geq M_0$. For such a $k$, we have
		\[ P\left( |\bar{X}_k-\mu|> \epsilon \right)\leq P\left( |\bar{X}_k-\bar{\mu}_k|> \frac{\epsilon}{2} \right)=P\left(\frac{1}{k} |T_k|> \frac{\epsilon}{2}  \right).  \]
		Using the previous argument simultaneously for all $k$ between $M\geq M_0$ and some $N$, we find that
		\begin{align*}
			P\left( \max\limits_{M\leq k\leq N}\,|\bar{X}_k-\mu|> \epsilon \right) \leq  P\left(\max\limits_{M\leq k\leq N}\,\frac{1}{k} |T_k|>\frac{\epsilon}{2}\right)\leq \frac{4}{\epsilon^2}\left[ \frac{1}{M^2}\left( \sum_{k=1}^{M}\sigma_k^2 \right)+\sum_{k=M+1}^{N} k^{-2}\sigma_k^2\right].
		\end{align*}
		
		Letting $N$ tend to infinity and keeping $M$ fixed gives
		\begin{align}\label{K_eps}
			P\left(\sup\limits_{k\geq M}\, |\bar{X}_k-\mu|>\epsilon\right)\leq \frac{4}{\epsilon^2}\left[ \frac{1}{M^2}\left( \sum_{k=1}^{M}\sigma_k^2 \right)+D_{M+1}\right].
		\end{align}
		For any $\epsilon>0$ the rhs effectively tends to $0$ as $M\rightarrow \infty$ and hence, the r.e. sets $K_M^{\epsilon}=\{ \omega \in 2^{\N}\mid \sup\limits_{k\geq M}\, |\bar{X}_k-\mu|>\epsilon \}$
		allow us to define the Martin-L\"of test: let $M(n,l)$ be a computable function such that
		\begin{align*}
			\mu\left(K_{M(n,l)}^{2^{-l}}\right)=P\left(\sup\limits_{k\geq M(n,l)}\, |\bar{X}_k-\mu|>2^{-l}\right)<2^{-(n+l)}
		\end{align*} 
		and define $U_n=\cup_{l=1}^\infty K_{M(n,l)}^{2^{-l}}$. The $U_n$ are uniformly r.e. since the $K_{M(n,l)}^{2^{-l}}$ are uniformly r.e. in $n,l$ and by construction, $\mu(U_n)<2^{-n}$. Hence, the $U_n$ define a Martin-L\"of test whose intersection contains all sequences for which $\bar{X}_n \nrightarrow \mu$.
	\end{proof}

	\begin{proof}[Proof of Lemma \ref{rad_lemma}]
		The case where $m$ is odd is trivial: Note that the distribution of $S_n$ (and therefore $S_n^m$) is completely symmetric around $0$. Hence $\mathbb{E}[S_n^m]=0$. 
		
		~
		
		Now for the case where $m$ is even: Using the multinomial theorem (Spiegel \cite[p3]{Spiegel_book}), we find
		\begin{align*}
			S_n^m=&(r_1+r_2+\ldots+r_n)^m\\
			=& \sum_{k_1+k_2+\ldots+k_n=m} \binom{m}{k_1,k_2,\ldots,k_n}r_1^{k_1}r_2^{k_2}\ldots r_n^{k_n},
		\end{align*}
		where \[\binom{m}{k_1,k_2,\ldots,k_n}=\frac{m!}{k_1!\cdot k_2!\cdot \ldots \cdot k_n!}. \]
		Taking expectation, we find
		\begin{align*}
			\mathbb{E}[S_n^m]=\sum_{k_1+k_2+\ldots+k_n=m} \binom{m}{k_1,k_2,\ldots,k_n}\mathbb{E}[r_1^{k_1}]\mathbb{E}[r_2^{k_2}]\ldots \mathbb{E}[r_n^{k_n}].
		\end{align*}
		Since $\mathbb{E}[r_i^{k_i}]$ is $0$ if $k_i$ is odd and $1$ if $k_i$ is even, this simplifies to
		\begin{align*}
			\mathbb{E}[S_n^m]=\sum_{\substack{k_1+k_2+\ldots+k_n=m\\\text{all }k_i\text{ even}}} \binom{m}{k_1,k_2,\ldots,k_n}
		\end{align*}
		Simplifying the summation such that we only sum over distinct partitions $(k_1,k_2,\ldots,k_l)$ of $m$ yields 
		\begin{align*}
			\mathbb{E}[S_n^m]=&\sum_{\substack{p=(k_1,k_2,\ldots,k_l)\\\text{partition of }m, \\\text{all }k_i\text{ even}
			}} \binom{m}{k_1,k_2,\ldots,k_l}K_{p}^{(n)},
		\end{align*}
		with
		\begin{align*}
			K_p^{(n)}=K_{(k_1,k_2,\ldots,k_l)}^{(n)}=\begin{aligned}
				&\text{ The number of ways we can write }k'_1+k'_2+\ldots+k'_n=m\\
				& \text{ where the non-zero terms are exactly the }k_i.
			\end{aligned}
		\end{align*}
		While an explicit formula for $K_p^{(n)}$ is quite cumbersome, we only give the following bounds, which suffice for the coming discussion:
		\[  \binom{n}{l}\leq K_{(k_1,k_2,\ldots,k_l)}^{(n)} \leq \frac{n!}{(n-l)!}<n^l. \]
		The lower (resp. upper) bound is achieved by assuming that all the $k_i$ are the same (resp. different). In particular, $K_{p}^{(n)}=\mathcal{O}(n^l)$ as $n\rightarrow \infty$. 
		
		Note that $(k_1,k_2,\ldots,k_l)$ is a partition of $m$ with even coefficients if and only if  $\left(\frac{k_1}{2},\frac{k_2}{2},\ldots,\frac{k_l}{2}\right)$ is a partition of $\frac{m}{2}$. Hence
		\begin{align*}
			\mathbb{E}[S_n^m]=&\sum_{\substack{p=(k_1,k_2,\ldots,k_l)\\\text{partition of }\frac{m}{2}}} \binom{m}{2k_1,2k_2,\ldots,2k_l}K_{2p}^{(n)}=\sum_{l=1}^{m/2} \sum_{\substack{p=(k_1,k_2,\ldots,k_l)\\\text{partition of }\frac{m}{2} \\\text{of length }l}} \binom{m}{2k_1,2k_2,\ldots,2k_l}K_{2p}^{(n)},
		\end{align*}
		
		Note that there is only 1 partition of length $l=m/2$, namely $(1,\ldots,1)$. In that case, $K_{2p}^{(n)}=\binom{n}{\frac{m}{2}}$. Hence,
		\begin{align*}
			\mathbb{E}[S_n^m]=& \sum_{l=1}^{m/2} \sum_{\substack{p=(k_1,k_2,\ldots,k_l)\\\text{partition of }\frac{m}{2} \\\text{of length }l}} \binom{m}{2k_1,2k_2,\ldots,2k_l}K_{2p}^{(n)}\\
			=& \binom{n}{\frac{m}{2}}\frac{m!}{2^{m/2}} + \sum_{l=1}^{m/2-1} \sum_{\substack{p=(k_1,k_2,\ldots,k_l)\\\text{partition of }\frac{m}{2} \\\text{of length }l}} \binom{m}{2k_1,2k_2,\ldots,2k_l}K_{2p}^{(n)}\\
			=& \frac{m!}{\left(\frac{m}{2}\right)!2^{m/2} }n^{m/2}+\mathcal{O}\left(n^{\frac{m}{2}-1}\right)\\
			=& (m-1)!!\cdot n^{m/2}+\mathcal{O}\left(n^{\frac{m}{2}-1}\right).
		\end{align*}
	\end{proof}

	\subsection{Alternative versions}\label{variations_section}
	
	Recall that our way of defining the sampling scheme (\ref{sampling_scheme}) was mostly arbitrary. As the only requirement was that the samples resembled $\frac{2S_n-n}{\sqrt{n}}$ for some large $n$, nothing forced us to require that $X_k$ uses exactly $k$ digits. A slightly more general approach is to let $X_k$ use $n(k)$ digits for some computable function $n:\mathbb{N} \mapsto \mathbb{N}$. For example, if $n(k)=2k$, the SBVs become
	\begin{align*}
		\underbrace{\omega_1\quad \omega_2}_{ X_1=\frac{2(\omega_1+\omega_2)-2}{\sqrt{2}} }\qquad\underbrace{ \omega_3 \quad \omega_4\quad \omega_5\quad \omega_6}_{ X_2=\frac{2(\omega_3+\omega_4+ \omega_5+\omega_6)-4}{\sqrt{4}} }\qquad\underbrace{\omega_7\quad \omega_8 \quad \omega_9\quad \omega_{10}\quad \omega_{11} \quad \omega_{12}}_{ X_3=\frac{2(\omega_7+ \omega_8 + \omega_9+ \omega_{10}+ \omega_{11} + \omega_{12})-6}{\sqrt{6}} }\qquad \underbrace{\omega_{13}\quad \ldots}_{ X_4=\ldots }.
	\end{align*}
	
	In this section, we consider two cases: $n(k)\rightarrow \infty$ effectively and $n(k)=N$ for some fixed $N\in \mathbb{N}$.
	
	~
	
	\underline{$n(k)\rightarrow  \infty$ effectively}: As a generalization of (\ref{expectationSBV}), we have
	\begin{align*}
		\mathbb{E}\left[X_k^m\right]=\begin{cases}
			0& \text{if } m \text{ is odd},\\
			\displaystyle 
			(m-1)!!+\mathcal{O}\left(\frac{1}{n(k)}\right) & \text{if } m \text{ is even}.
		\end{cases}
	\end{align*} 
	Hence, the averages (and variances) still converge to the same values. As nothing truly changes, Theorem \ref{main_result} still holds for these SBVs. 
	
	~
	
	$\underline{n(k)=N}:$ As an example, if $N=4$ the SBVs become
	\begin{align*}
		\underbrace{\omega_1\quad \omega_2\quad \omega_3\quad \omega_4}_{ X_1=\frac{2(\omega_1+\omega_2+\omega_3+ \omega_4)-4}{\sqrt{4}} }\qquad\underbrace{ \omega_5\quad \omega_6\quad \omega_7\quad \omega_8}_{ X_2=\frac{2(\omega_5+ \omega_6+ \omega_7+ \omega_8)-4}{\sqrt{4}} }\qquad\underbrace{\omega_9\quad \omega_{10}\quad \omega_{11} \quad \omega_{12}}_{ X_3=\frac{2(\omega_9+ \omega_{10}+ \omega_{11} + \omega_{12})-4}{\sqrt{4}} }\qquad \underbrace{\omega_{13}\quad \ldots}_{ X_4=\ldots }.
	\end{align*}
	Of course, the corresponding moments $\mathbb{E}\left[\widehat{D}_k^m \right]$ and distribution function $\widehat{F}_k$ no longer converge to those of the normal distribution, but instead to those of a normalized binomial distribution. However, as $N$ tends to infinity, these moments and distribution function themselves tend to those of the normal distribution function. Hence, we have for all MLR sequences:
	\[ \lim\limits_{N\rightarrow \infty}\lim\limits_{k\rightarrow \infty} \frac{X_1^m+X_2^m+\ldots+X_k^m}{k} \rightarrow \nu_{m},\qquad \lim\limits_{N\rightarrow \infty}\lim\limits_{k\rightarrow \infty} \widehat{F}_k(t)=\Phi(t) \quad t\in \R. \]

	Note that the only condition that is required for this version of the CLT, is that the original sequence $\omega$ is normal. Schnorr random sequences are of course all normal, see Shen \cite[Thm 167 \& 168]{Shen}.
	
	~
	
		Sampling scheme (\ref{sampling_scheme}) can also be generalized by introducing an in-between function $f$ in the following way:
		\begin{align}\label{sampling_scheme2}
			\underbrace{\omega_1 \ldots \omega_k}_{ X_1=\frac{f(\omega_{1:k})}{\sqrt{1}} }\quad\underbrace{\omega_{k+1}\ldots \omega_{2k}\quad \omega_{2k+1}\ldots \omega_{3k}}_{ X_2=\frac{f(\omega_{(k+1):2k})+f(\omega_{(2k+1):3k})}{\sqrt{2}} }\quad \underbrace{\omega_{3k+1}\ldots \omega_{4k}\quad \omega_{4k+1}\ldots \omega_{5k}  \quad \omega_{5k+1}\ldots \omega_{6k}}_{ X_3=\frac{f(\omega_{(3k+1):4k})+f(\omega_{(4k+1):5k})+f(\omega_{(5k+1):6k})}{\sqrt{3}} }
		\end{align}
		with the added condition that when $f$ is applied to standard Bernoulli random variables $\mathbb{E}[f]=0$ and $\mathbb{E}[f^2]=1$. The natural question is then to ask what conditions can be placed on $f$ such that Theorem \ref{main_result} still holds. Another question is whether the theorem still holds if the definition of $X_k$ uses slightly overlapping digits of $\omega$. We leave these questions for future work.

	\section{The almost sure central limit theorem}\label{ASCLT_section}
	
	In this section, the $X_k$ no longer denote the SBVs in \eqref{sampling_scheme}, but some other SBVs such that $E[X_k]=0$ and $E[X_k^2]=1$ (in particular, $X_k=\omega_k$).
	
	We will show that the classic ASCLT (see Brosamler \cite{Brosamler})
	\begin{align}\label{ASCLT}
		\lim_{n\rightarrow\infty }\frac{1}{\log n}\sum_{k=1}^n\frac{1}{k} I\left\{\frac{S_k}{\sqrt{k}}\leq x \right\}\rightarrow \Phi(x)\quad \text{ for all }x\in \R 
	\end{align}
	holds for all Martin-L\"of (and even Schnorr sequences). Our proof is a based on the one presented by Jonsson \cite{Jonsson}. Since we only show this basic form, we can substantially shorten that proof (apart from adding some computability restrictions).
	
	~
	
	We start this section with three preliminary lemmas. Although slightly restated in the context of MLR sequences, the proofs of these lemmas are identical to those found in \cite{Jonsson} (respectively Theorem 2.4, Theorem 2.25 \& Lemma 3.2). 
	
	\begin{lemma}\label{portmanteau}
		Let $(d_k)$ be a sequence of positive uniformly computable real numbers with $D_n=\sum_{k\leq n}d_k$. Also, let $(X_k)$ be a sequence of SBVs and let $G$ be a probability measure on $\R$ with $C_G$ its continuity points, then the following three conditions are equivalent:
		\begin{enumerate}
			\item \label{cond:first}  $\frac{1}{D_n} \sum_{k\leq n}d_k f(X_k)\rightarrow \int f\,dG$ for all MLR sequences and all bounded and continuous $f$.
			\item \label{cond:second} $\frac{1}{D_n} \sum_{k\leq n}d_k f(X_k)\rightarrow \int f\,dG$ for all MLR sequences and all bounded Lipschitz-functions $f$.
			\item \label{cond:third} $\frac{1}{D_n} \sum_{k\leq n}d_k I\{X_k\leq x\}\rightarrow G(x)$ for all MLR sequences and all $x\in C_G$.
		\end{enumerate}
	\end{lemma}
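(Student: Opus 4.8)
The plan is to reduce the statement to the classical Portmanteau theorem for a single sequence of probability measures on $\R$. First I would fix an \emph{arbitrary} Martin-L\"of random sequence $\omega$; since this is the outer quantifier common to all three conditions, it suffices to prove their equivalence for each such fixed $\omega$. For a fixed $\omega$ every $X_k(\omega)$ is an ordinary real number, so
\[ \mu_n:=\frac{1}{D_n}\sum_{k\leq n}d_k\,\delta_{X_k(\omega)} \]
is an ordinary Borel probability measure on $\R$ (the $d_k$ are positive with sum $D_n$). Conditions \ref{cond:first}, \ref{cond:second}, \ref{cond:third} then read, respectively, ``$\int f\,d\mu_n\to\int f\,dG$ for all bounded continuous $f$'', ``the same for all bounded Lipschitz $f$'', and ``$\mu_n((-\infty,x])\to G((-\infty,x])$ for every $x\in C_G$'', which are exactly three equivalent formulations of weak convergence $\mu_n\Rightarrow G$. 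Hence no algorithmic-randomness machinery is needed for this lemma; in particular the uniform computability of $(d_k)$ — which will matter in the lemmas that actually build Martin-L\"of tests — plays no role here, and $(d_k)$ only needs to be positive.

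Next I would carry out the three standard implications. The implication \ref{cond:first}$\Rightarrow$\ref{cond:second} is immediate. For \ref{cond:second}$\Rightarrow$\ref{cond:third}, fix $x\in C_G$ and, for $\delta>0$, take piecewise-linear ``ramps'' $g_\delta\leq I\{t\leq x\}\leq h_\delta$, where $g_\delta$ equals $1$ on $(-\infty,x-\delta]$ and $0$ on $[x,\infty)$, and $h_\delta$ equals $1$ on $(-\infty,x]$ and $0$ on $[x+\delta,\infty)$; both are bounded and Lipschitz. Applying \ref{cond:second} to $g_\delta$ and $h_\delta$ gives
\begin{align*}
G((-\infty,x-\delta])&\leq\int g_\delta\,dG=\lim_n\int g_\delta\,d\mu_n\leq\liminf_n\mu_n((-\infty,x])\\
&\leq\limsup_n\mu_n((-\infty,x])\leq\lim_n\int h_\delta\,d\mu_n=\int h_\delta\,dG\leq G((-\infty,x+\delta]),
\end{align*}
and letting $\delta\downarrow0$ the two outer terms both tend to $G((-\infty,x])$, because $G$ has no atom at $x$; this pins the limit.

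For \ref{cond:third}$\Rightarrow$\ref{cond:first}, given $f$ bounded continuous with $|f|\leq B$ and $\varepsilon>0$, I would pick $\pm A\in C_G$ (possible since the complement of $C_G$ is countable) large enough that $G(\R\setminus[-A,A])<\varepsilon$; condition \ref{cond:third} at the points $\pm A$ then forces $\mu_n(\R\setminus[-A,A])<2\varepsilon$ for all large $n$. On $[-A,A]$, uniform continuity lets me approximate $f$ within $\varepsilon$ by a step function whose breakpoints lie in $C_G$, and \ref{cond:third} disposes of each step. Collecting the error terms yields $\limsup_n\bigl|\int f\,d\mu_n-\int f\,dG\bigr|\leq cB\varepsilon$; sending $\varepsilon\downarrow0$ finishes the argument (one may instead quote the Helly--Bray theorem directly). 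The two approximation steps are routine; the point needing genuine care — the nearest thing to an obstacle — is this truncation, where the tail mass of $G$ and of \emph{every} $\mu_n$ must be made small simultaneously, which is precisely why the cut-points $\pm A$ must be continuity points of $G$. This is the argument of Jonsson's Theorem 2.4, now read one MLR sequence at a time.
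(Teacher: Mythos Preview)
Your proposal is correct and matches the paper's approach: the paper does not write out a proof but simply refers to Jonsson's Theorem~2.4, remarking that the argument is identical once restated for MLR sequences. Your observation that fixing an MLR $\omega$ reduces everything to the classical Portmanteau equivalences for the weighted empirical measures $\mu_n$ (and that the uniform computability of $(d_k)$ plays no role here) is exactly the content of that remark.
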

	
	\begin{lemma}\label{notes2.25}
		Let $(p_n)$ and $(q_n)$ be two positive sequences and let $P_n=\sum_{k\leq n} p_n$, $Q_n=\sum_{k\leq n} q_n$. Then if $p_n/q_n\rightarrow 1$, we have
		\[  \frac{1}{P_n}\sum_{k=1}^n p_ks_k\rightarrow s\qquad \Leftrightarrow \qquad  \frac{1}{Q_n}\sum_{k=1}^n q_ks_k\rightarrow s \]
		for any bounded sequence $(s_n)$ and $s\in \R$.
	\end{lemma}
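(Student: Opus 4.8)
The statement is a purely classical fact about weighted averages of real sequences (a Toeplitz/regular-summation-type lemma), with no computability or randomness involved — which is why it can be quoted essentially verbatim from \cite{Jonsson}. The plan is the following. First I would reduce to the case $s=0$: since $\frac{1}{P_n}\sum_{k\le n}p_k=1=\frac{1}{Q_n}\sum_{k\le n}q_k$, replacing $s_k$ by $s_k-s$ (still a bounded sequence) subtracts exactly $s$ from both weighted averages, so it suffices to prove $\frac{1}{P_n}\sum_{k\le n}p_ks_k\to0\iff\frac{1}{Q_n}\sum_{k\le n}q_ks_k\to0$. Second, since $p_n/q_n\to1$ also gives $q_n/p_n\to1$, the two sides play symmetric roles, so it is enough to prove one implication, say $\frac{1}{Q_n}\sum q_ks_k\to0\Rightarrow\frac{1}{P_n}\sum p_ks_k\to0$. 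Along the way I would record that (under the standing assumption $Q_n\to\infty$, which holds in every application in this paper, e.g. $q_k=1/k$) one has $P_n/Q_n\to1$, hence $P_n\to\infty$: given $\epsilon\in(0,1)$ pick $N$ with $(1-\epsilon)q_k\le p_k\le(1+\epsilon)q_k$ for $k\ge N$, split $P_n=P_{N-1}+\sum_{k=N}^np_k$, bound the tail between $(1\mp\epsilon)(Q_n-Q_{N-1})$, divide by $Q_n$, let $n\to\infty$ and then $\epsilon\to0$.

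For the main implication, let $M=\sup_k|s_k|<\infty$ and fix $\epsilon>0$. Choose $N$ so that $|p_k-q_k|\le\epsilon q_k$ for all $k\ge N$, and for $n\ge N$ decompose
\[ \sum_{k=1}^np_ks_k=\underbrace{\sum_{k=1}^{N-1}p_ks_k}_{=:A_N}\;+\;\sum_{k=N}^nq_ks_k\;+\;\sum_{k=N}^n(p_k-q_k)s_k . \]
Here $|A_N|\le MP_{N-1}$ is a constant independent of $n$; the middle sum equals $\sum_{k=1}^nq_ks_k-\sum_{k=1}^{N-1}q_ks_k$, which by hypothesis is $o(Q_n)$ plus a constant bounded by $MQ_{N-1}$; and the last sum is bounded in absolute value by $M\epsilon\sum_{k=N}^nq_k\le M\epsilon Q_n$. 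Dividing by $P_n$ and using $P_n\to\infty$ together with $P_n/Q_n\to1$ gives
\[ \limsup_{n\to\infty}\left|\frac{1}{P_n}\sum_{k=1}^np_ks_k\right|\le M\epsilon . \]
Since $\epsilon>0$ was arbitrary, the left-hand side is $0$, which is the desired conclusion; the reverse implication follows by the symmetry noted above.

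I do not expect a genuine obstacle here. The only mild subtlety is that $p_k/q_k$ need not be monotone, nor even of bounded variation, so the tempting route via Abel summation against the partial sums $\sum_{k\le n}q_ks_k$ does not close cleanly; the direct splitting above sidesteps this. Uniform boundedness of $(s_k)$ is used in exactly one place — to control $\sum_{k=N}^n(p_k-q_k)s_k$ by $M\epsilon Q_n$ — and it is essential. One should also flag the implicit hypothesis $Q_n\to\infty$: if $\sum q_k<\infty$ then $\sum p_k<\infty$ as well, but the two limiting averages need not coincide, so some divergence condition is needed for the equivalence as stated; in this paper it is always met. No part of the argument appeals to computability, consistent with the remark that the proof is identical to the one in \cite{Jonsson}.
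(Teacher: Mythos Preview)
Your proof is correct and complete; the paper itself does not supply an independent argument for this lemma but simply refers to \cite[Theorem~2.25]{Jonsson}, so there is nothing substantive to compare against. Your direct splitting argument (reduce to $s=0$, exploit symmetry, show $P_n/Q_n\to1$, then split the sum at the index $N$ where $|p_k-q_k|\le\epsilon q_k$) is exactly the standard Toeplitz-type proof one expects in that reference, and your remark that the equivalence tacitly requires $Q_n\to\infty$ is a valid caveat worth keeping.
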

	
	\begin{lemma}\label{notes3.2}
		Let $(d_k)$ and $(D_n)$ be as in Lemma \ref{portmanteau} and assume that
		\[D_n\rightarrow \infty \quad \text{ and }\quad D_{n+1}/D_n\rightarrow 1  \]
		effectively as $n\rightarrow \infty$, then for each computable $a>1$ there exists a computable subsequence $n_k$ such that 
		\[ D_{n_k}\geq a^k  \quad \text{and}\quad D_{n_k}/a^k \rightarrow 1. \]
	\end{lemma}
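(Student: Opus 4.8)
The plan is to take, following Jonsson, $n_k:=\min\{n:D_n\geq a^k\}$ and then bolt on the effectivity. This is well defined since $D_n\to\infty$, and as the $d_k$ are positive, $(D_n)$ is strictly increasing, so for every $k$ with $a^k>D_1$ we have $n_k\geq 2$ and, by minimality, $D_{n_k-1}<a^k\leq D_{n_k}$. The first conclusion $D_{n_k}\geq a^k$ is immediate. For the second, write $D_{n_k}=D_{n_k-1}\cdot(D_{n_k}/D_{n_k-1})<a^k\cdot(D_{n_k}/D_{n_k-1})$, so $1\leq D_{n_k}/a^k<D_{n_k}/D_{n_k-1}$; since $D_n\to\infty$ forces $n_k\to\infty$, the hypothesis $D_{n+1}/D_n\to1$ gives $D_{n_k}/D_{n_k-1}\to1$ and hence $D_{n_k}/a^k\to1$. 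Moreover $(n_k)$ is strictly increasing for all large $k$: if $n_k=n_{k+1}=m$ then $D_m\geq a^{k+1}$ while $D_{m-1}<a^k$, so $D_m/D_{m-1}>a>1$, which is impossible once $m-1$ exceeds the effective modulus $N(a-1)$ of $D_{n+1}/D_n\to1$ (beyond which $D_{n+1}/D_n<a$).

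Upgrading the two limits to effective ones is bookkeeping with moduli. First, $n_k\to\infty$ effectively: $n_k>m$ iff $D_m<a^k$, and since $D_m$ is a computable positive real and $a$ a computable real $>1$, one computes for each $m$ a rational $u_m>(\log D_m)/(\log a)$, after which $n_k>m$ for all $k\geq\lceil u_m\rceil+1$. Second, $D_{n_k}/a^k\to1$ effectively: given a rational $\epsilon>0$, the effective modulus $N(\epsilon)$ of $D_{n+1}/D_n\to1$ together with the previous item (applied with $m=N(\epsilon)$) yields a computable $K$ with $n_k-1\geq N(\epsilon)$, hence $1\leq D_{n_k}/a^k<D_{n_k}/D_{n_k-1}<1+\epsilon$, for all $k\geq K$.

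The one genuinely delicate point — where I expect the only real work — is the computability of $(n_k)$ itself, since the expression $\min\{n:D_n\geq a^k\}$ is not visibly computable: comparing the computable reals $D_n$ and $a^k$ is undecidable in the borderline case $D_n=a^k$. I would resolve this via the effective divergence of $(D_n)$: choose computably a rational $M_k>a^k$; the divergence modulus then gives a computable $u_k$ with $D_{u_k}>M_k$, so $n_k\leq u_k$. Instead of isolating the exact crossing, search — by dovetailed semidecision over $n=1,\dots,u_k$ — for \emph{any} index $n$ such that both ``$D_n>a^k$'' and ``$D_{n-1}<a^k(1+1/k)$'' get confirmed (with the convention $D_0:=0$). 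Such an $n$ exists: for $n^\dagger:=\min\{n:D_n>a^k\}$ both inequalities hold strictly (the second because $D_{n^\dagger-1}\leq a^k$), hence are confirmable, while no index below $n^\dagger$ can confirm the first; so the search terminates and returns some $n_k$ with $a^k\leq D_{n_k}$ and $D_{n_k}<a^k(1+1/k)\cdot(D_{n_k}/D_{n_k-1})$. Since $n_k\geq n^\dagger\geq\min\{n:D_n\geq a^k\}\to\infty$ effectively, the sandwich $a^k\leq D_{n_k}\leq a^k(1+1/k)(D_{n_k}/D_{n_k-1})$ again forces $D_{n_k}/a^k\to1$ effectively; a final monotonisation replacing $n_k$ by $\max(n_k,n_{k-1}+1)$ — which, as the raw witnesses are eventually strictly increasing with gaps $n_{k+1}-n_k\to\infty$, alters only finitely many terms — makes $(n_k)$ a bona fide computable subsequence. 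All that remains beyond this is the routine sandwich estimate together with the modulus computations just indicated.
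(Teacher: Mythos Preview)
Your proposal is correct and follows the same route the paper invokes: the paper does not actually prove this lemma but defers to Jonsson's Lemma~3.2, and your construction $n_k=\min\{n:D_n\geq a^k\}$ together with the sandwich $1\leq D_{n_k}/a^k<D_{n_k}/D_{n_k-1}\to 1$ is exactly Jonsson's argument. Your treatment in fact goes further than the paper's terse ``identical to [Jonsson]'', since you spell out the effectivity bookkeeping and squarely confront the one point the paper glosses over---that the exact crossing index is not decidable over computable reals---and your dovetailed search for a witness to the pair of strict inequalities $D_n>a^k$, $D_{n-1}<a^k(1+1/k)$ is a clean resolution.
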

	Now, we start with the actual proof. By Lemma \ref{notes2.25}, it is sufficient to show that 
	\[ \lim_{n\rightarrow\infty }\frac{1}{\log (n+1)}\sum_{k=1}^n\log\left(1+\frac{1}{k}\right) I\left\{\frac{S_k}{\sqrt{k}}\leq x \right\}\rightarrow \Phi(x)\quad \text{ for all }x\in \R \]
	holds for all MLR numbers. Letting $d_k=log\left(1+\frac{1}{k}\right), D_n=\sum_{k\leq n}d_k$ and $Y_k=S_k/\sqrt{k}$, the LHS can be rewritten as $\lim_{n\rightarrow\infty }\frac{1}{D_n}\sum_{k=1}^nd_k I\left\{Y_k\leq x \right\}$. By Lemma \ref{portmanteau}, this is equivalent to
	\begin{align}\label{toshowTn}
		\frac{1}{D_n}\sum_{k\leq n}d_k f\left(\frac{S_k}{\sqrt{k}} \right)\rightarrow \int f\,d \Phi
	\end{align}
	for all MLR sequences and all bounded, Lipschitz continuous $f$. It follows from $Y_k \xrightarrow{D} N(0,1)$ and Portmanteau's lemma that
	\[ Ef(Y_k)\rightarrow \int f\,d\Phi \quad \text{as } k\rightarrow\infty \]
	for such functions $f$. It is easy to verify that this implies
	\[ \frac{1}{D_n}\sum_{k=1}^{n}d_k Ef(Y_k)\rightarrow \int f\,d\Phi \quad \text{as } k\rightarrow\infty. \]
	It therefore remains to prove that
	\begin{align}\label{defTn}
		T_n=\frac{1}{D_n}\sum_{k=1}^{n}d_k\left(f(Y_k)-Ef(Y_k) \right)\rightarrow 0 \quad \text{as } n\rightarrow\infty
	\end{align}
	for all MLR numbers. Let
	\[ \xi_k=f(Y_k)-Ef(Y_k), \]
	so that $T_n=\frac{1}{D_n}\sum_{k\leq n}d_k\xi_k$. The main idea of the proof is to show the following lemma.
	\begin{lemma}\label{expTn}
		Let $T_n$ be as defined in \eqref{defTn}, then the following holds.
		\begin{align}\label{Tnbound}
			E[T_n^2]\leq  C \frac{\log \log n}{\log n}.
		\end{align}
		for large $n$ and some $C>0$.
	\end{lemma}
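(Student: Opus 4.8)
The plan is to bound $E[T_n^2] = \frac{1}{D_n^2}\sum_{j,k\leq n} d_j d_k E[\xi_j \xi_k]$ by controlling the covariances $E[\xi_j\xi_k]$ for $j \leq k$. Since $f$ is bounded (say $|f|\leq C_f$) and Lipschitz (constant $L$), each $\xi_k$ is bounded by $2C_f$, giving the trivial bound $|E[\xi_j\xi_k]|\leq 4C_f^2$. The crux is that when $j$ is much smaller than $k$, the variables $Y_j = S_j/\sqrt j$ and $Y_k = S_k/\sqrt k$ are "nearly independent" because $Y_k$ is dominated by the fresh increments $\omega_{j+1},\ldots,\omega_k$; I expect a covariance bound of the shape $|E[\xi_j\xi_k]| \lesssim \sqrt{j/k}$ (or $(j/k)^{1/2}$ up to constants), obtained by writing $S_k = S_j + (S_k - S_j)$, Taylor-expanding $f(Y_k)$ around $f((S_k-S_j)/\sqrt k)$ using the Lipschitz property, and noting $(S_k-S_j)/\sqrt k$ is independent of $\xi_j$ so its contribution to the covariance vanishes. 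The error term is then $L\cdot E[|\xi_j|]\cdot E[|S_j|]/\sqrt k \lesssim \sqrt j/\sqrt k$.

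**Next I would** assemble the double sum. Using $d_k = \log(1 + 1/k) \asymp 1/k$ and $D_n \asymp \log n$, we get
\begin{align*}
E[T_n^2] \lesssim \frac{1}{(\log n)^2}\sum_{k\leq n}\frac{1}{k}\sum_{j\leq k}\frac{1}{j}\sqrt{\frac{j}{k}} = \frac{1}{(\log n)^2}\sum_{k\leq n}\frac{1}{k^{3/2}}\sum_{j\leq k}\frac{1}{\sqrt j} \lesssim \frac{1}{(\log n)^2}\sum_{k\leq n}\frac{1}{k},
\end{align*}
since $\sum_{j\leq k} j^{-1/2} \asymp \sqrt k$. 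This last sum is $\asymp \log n$, which only yields $E[T_n^2] \lesssim 1/\log n$ — better than the claimed bound but missing the $\log\log n$ factor. The discrepancy signals that the covariance decay I guessed is slightly too optimistic, or that near-diagonal terms ($j$ close to $k$) must be handled separately: for $j$ within a constant factor of $k$ one uses the trivial bound $4C_f^2$, contributing $\frac{1}{(\log n)^2}\sum_k \frac1k \sum_{j\asymp k}\frac1j = O(1/\log n)$, still fine. So I would re-examine the covariance estimate; the honest bound from Jonsson's argument is likely $|E[\xi_j\xi_k]| \lesssim (j/k)^{1/2}$ for $j < k/2$ but with an additional logarithmic loss entering when one does the careful second-moment expansion for the increment structure, or the $\log\log n$ arises from splitting the sum dyadically and bounding blocks. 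Either way the target $C\log\log n/\log n$ is an upper bound, so a cruder but correct covariance estimate suffices.

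**The main obstacle** will be making the covariance bound $E[\xi_j\xi_k]$ rigorous and effective: one must show $|f(Y_k) - f((S_k-S_j)/\sqrt k)| \leq L|S_j|/\sqrt k$, take expectations against $\xi_j$, use independence of $(S_k - S_j)$ from the first $j$ coordinates to kill the main term, and bound $E[|S_j|] \leq \sqrt j$ by Jensen. All constants here are computable given $C_f$ and $L$ (which are part of the data of $f$), so uniformity in $n$ for the final $O(\log\log n/\log n)$ bound is automatic. A secondary point is that \eqref{Tnbound} is stated for "large $n$", so I need not worry about small-$n$ pathologies; I would just fix a threshold beyond which $\log\log n > 1$ and all the $\asymp$ comparisons hold with explicit constants. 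The polish needed is purely in tracking constants through the Taylor estimate and the two nested sums.
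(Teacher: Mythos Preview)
Your covariance estimate and overall strategy match the paper's exactly: write $S_k=S_j+(S_k-S_j)$, use that $f\bigl((S_k-S_j)/\sqrt{k}\bigr)$ is independent of $\xi_j$ (together with $E\xi_j=0$) to kill the main term, then apply the Lipschitz bound to get $|E[\xi_j\xi_k]|\le C'(j/k)^{1/2}$. One small slip: you wrote the error as $L\cdot E|\xi_j|\cdot E|S_j|/\sqrt{k}$, but $\xi_j$ and $S_j$ are \emph{not} independent, so you cannot factor that expectation. The fix (which the paper uses) is to bound $|\xi_j|\le 2C_f$ pointwise first, giving $|E[\xi_j\xi_k]|\le 2C_f\,L\,E|S_j|/\sqrt{k}\le C'(j/k)^{1/2}$ via $E|S_j|\le\sqrt{j}$.

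Where you diverge from the paper is in the summation, and here you should trust your own computation. Your direct estimate
\[
\frac{1}{(\log n)^2}\sum_{k\le n}\frac{1}{k^{3/2}}\sum_{j\le k}\frac{1}{\sqrt{j}}
\;\lesssim\;\frac{1}{(\log n)^2}\sum_{k\le n}\frac{1}{k}
\;\asymp\;\frac{1}{\log n}
\]
is correct, and $1/\log n\le \log\log n/\log n$ for large $n$, so the lemma follows immediately. There is no hidden logarithmic loss and no need to re-examine the covariance bound. The paper instead splits the inner range into $A_k=\{l:k\le l\le k\log n\}$ and $B_k=\{l:l>k\log n\}$, bounding $(k/l)^{1/2}$ by $1$ on $A_k$ (picking up the $\log\log n$) and by $(\log n)^{-1/2}$ on $B_k$; this yields the coarser bound $C\bigl(\tfrac{\log\log n}{\log n}+\tfrac{1}{\sqrt{\log n}}\bigr)$. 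Your single-line summation is cleaner and sharper; the paper's splitting is simply not tight, and the stated $\log\log n/\log n$ is an artifact of that cruder bookkeeping rather than a genuine feature of $E[T_n^2]$.
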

	This lemma is a reformulated version of Proposition 4.3 in \cite{Jonsson} and the proof given at the end of this section is based on the one given in \cite{Jonsson}. First, we show how this result can be used to prove \eqref{ASCLT}.
	
	Let $a>1$ and apply Lemma \ref{notes3.2} with this $a$ and $d_k=\log \frac{k+1}{k}$. Since $D_{n_k}/a^k\geq 1$,
	the inequality above for this subsequence then becomes
	\begin{align}
		ET_{n_k}^2\leq C \frac{k \log a}{a^k}.
	\end{align}
	Hence we have for any $M\in \N$,
	\begin{align*}
		E\sum_{k\geq M}T_{n_k}^2=\sum_{k\geq K} ET_{n_k}^2\leq C \sum_{k\geq M} \frac{k \log a}{a^k}=L_{a,M}<\infty,
	\end{align*}
	with $(a, M)\mapsto L_{a,M}$ computable and $L_{a,M}\rightarrow 0$ as $M\rightarrow \infty$. By Markov's inequality, we have
	\[ \mu\left\{\omega \mid \sup_{k\geq M} T_{n_k}^2>\epsilon \right\}\leq  \mu\left\{\omega \mid \sum_{k\geq M} T_{n_k}^2>\epsilon \right\} = P\left(\sum_{k\geq M} T_{n_k}^2>\epsilon \right)\leq  \frac{E\sum_{k\geq K} T_{n_k}^2}{\epsilon}=\frac{L_{a,M}}{\epsilon}. \]
	Similarly to the proof of Theorem \ref{main_result}, define $K_M^{\epsilon}$ as $\{ \omega \in 2^{\N}\mid \sup\limits_{k\geq M}T_{n_k}^2 >\epsilon \}$ and a function $M(n,l)$ such that $\mu(K^{2^{-l}}_{M(n,l)})<2^{-n-l}$. As explained before, the sets $U_n=\cup_{l=1}^\infty K_{M(n,l)}^{2^{-l}}$ define a Martin-L\"of test (and even a Schnorr test) whose intersection contains all sequences for which $T_{n_k}\nrightarrow 0$. Thus, $T_n\rightarrow 0$ for all Schnorr random sequences. 
	
	Now we show that convergence for the subsequence $T_{n_k}$ implies convergence for the whole sequence $T_n$.
	Consider an arbitrary $n$ and assume that $n_k < n \leq n_{k+1}$. Since $D_{n_k}\sim a^k$, we have
	\[D_{n_{k+1}}/D_{n_k}\rightarrow a. \]
	By assumption there exists an $M$ such that $-M < \xi_k$ (assume $M>0$), for all $k$. Define, for
	$n\geq 1$,
	\[ T_n'=\frac{1}{D_n}\sum_{k\leq n}d_k(\xi_k+M)=T_n+M. \]
	We have that $ T_{n_k}'\rightarrow M $
	for all Schnorr random sequences. Moreover, by positivity
	\[ D_{n_k}T_{n_k}'\leq D_{n}T_{n}'\leq D_{n_{k+1}}T_{n_{k+1}}' \]
	which gives
	\[ \frac{D_{n_k}}{D_{n_{k+1}}}T_{n_k}'\leq  T_n' \leq  \frac{D_{n_{k+1}}}{D_{n_k}}T_{n_{k+1}}'. \]
	By letting $a\rightarrow 1$, we can see that $T_n'\rightarrow M$ and hence $T_n\rightarrow 0$.
	
	~
	
	This finally proves that for all Schnorr random numbers, $T_n\rightarrow 0$ which in turn shows that \eqref{toshowTn} holds, finishing the proof.
	
	~
	
	We end this section with the proof of Lemma \ref{expTn}.
	
	\begin{proof}[Proof of Lemma \ref{expTn}]
		Note that for any numbers $a_k\geq 0$
		\begin{align}\label{easy_ineq2}
			\sum_{k\leq l}a_ka_l \leq 	\left(\sum_k a_k\right)^2\leq 2\sum_{k\leq l}a_ka_l .
		\end{align}
		Applying the second inequality to $T_n$ gives
		\[E[T_n^2]\leq \frac{2}{D_n^2}\sum_{1\leq k\leq l\leq  n}d_kd_lE(\xi_k\xi_l)\leq  \frac{2}{D_n^2}\sum_{1\leq k\leq l\leq  n}d_kd_l|E(\xi_k\xi_l)|.  \]
		For convenience, we define for all $k,l$ with $k\leq l$
		\[ f_{k,l}=f\left( \frac{S_l-S_k}{\sqrt{l}}\right) \]
		Next, let $K$ be the Lipschitz constant of $f$ and assume $|f|\leq K$. Since $f$ is bounded, so is $\xi_k$; say $|\xi_k|\leq C$. Using this, the Lipschitz continuity of $f$ and the fact that $f_{k,l}$ is independent of $\xi_k$, we obtain
		\begin{align*}
			\left|E(\xi_k\xi_l) \right|&=\left|E(\xi_k(\xi_l-f_{k,l})) \right|\\
			&= \left|E \left[ \xi_k  \left( f\left( \frac{S_l}{\sqrt{l}}\right)-E\left[f\left(\frac{S_l}{\sqrt{l}}\right)\right]-f_{k,l} \right) \right]  \right|\\
			&= \left|E\left[\xi_k  \left( f\left( \frac{S_l}{\sqrt{l}}\right)-f_{k,l} \right)  \right]\right|\\
			&\leq C E\left|f\left( \frac{S_l}{\sqrt{l}}\right)-f_{k,l}\right|\\
			&\leq C E\left[   \frac{ K \left|S_k \right|}{\sqrt{l}}  \right]\\
			&=  CK \sqrt{\frac{k}{l}} E\left[  \frac{  \left|S_k \right|}{\sqrt{k}}  \right].
		\end{align*}
		Now, for all $k$ we show that $E\left(\frac{\left|S_k\right|}{\sqrt{k}}\right)\leq 2$. 
		Indeed, let $D=\{ \omega \mid \frac{\left|S_k(\omega)\right|}{\sqrt{k}}<1 \}$, then
		\begin{align*}
			E\left(\frac{\left|S_k\right|}{\sqrt{k}}\right)=E_D\left(\frac{\left|S_k\right|}{\sqrt{k}}\right)+E_{D^C}\left(\frac{\left|S_k\right|}{\sqrt{k}}\right)
			\leq 1+\frac{1}{k}E\left[ S_k \right]^2
			= 1+\frac{1}{k}E\left[\sum_{i\leq k} X_i^2  \right]\leq 2.
		\end{align*}
		With that, we finally obtain
		\[ \left|E(\xi_k\xi_l) \right|\leq C'(k/l)^{1/2}. \]
		
		By the inequality above and using the definition of $D_n$ and $d_n$, 
		\[E[T_n^2]\leq  \frac{2}{D_n^2}\sum_{1\leq k\leq l\leq  n}d_kd_l|E(\xi_k\xi_l)|\leq \frac{C}{(\log(n+1))^2}\sum_{1\leq k\leq l\leq  n}\log\frac{k+1}{k}\log\frac{l+1}{l}\left(\frac{k}{l}\right)^{1/2}.  \]
		To obtain a bound for the sum on the RHS, we define for a fixed $n$ and any $k$ the following sets:
		\begin{align*}
			A_k&=\{l\mid k\leq l\leq n\text{ and }l\leq k\log n \}\\
			B_k&=\{l\mid k\leq l\leq n\text{ and }l> k\log n \}.
		\end{align*}
		Note that for large $k$ ($k>n / \log n$), $A_k=\{ k,k+1,\ldots,n \}$ and $B_k=\emptyset$. 
		\begin{align*}
			E[T_n^2]&\leq  \frac{C}{(\log(n+1))^2}\sum_{1\leq k\leq l\leq  n}\log\frac{k+1}{k}\log\frac{l+1}{l}\left(\frac{k}{l}\right)^{1/2}\\
			&=\frac{C}{(\log(n+1))^2}\left[\sum_{k=1}^n\sum_{l\in A_k }\log\frac{k+1}{k}\log\frac{l+1}{l}\left(\frac{k}{l}\right)^{1/2}+\sum_{k=1}^n\sum_{l\in B_k }\log\frac{k+1}{k}\log\frac{l+1}{l}\left(\frac{k}{l}\right)^{1/2}\right]
		\end{align*}
		Intuitively, we show that if $l\in A_k$ then $\log \frac{l+1}{l}$ is small, and if $l\in B_k$ then $k/l$ is small. 
		
		Starting with $A_k$, note that $A_k=\{k,k+1,\ldots, n_k \}$ for some $n_k\leq n$. Hence
		\[ \sum_{l\in A_k}\log \frac{l+1}{l}=\log \frac{n_k+1}{k}=\log \frac{n_k+1}{n_k}+\log \frac{n_k}{k}\leq \log 2 +\log\log n, \]
		where we used the definition of $A_k$. For large $n$, the above is clearly bounded by $2\log\log n$. Hence
		\begin{align*}
			\sum_{k=1}^n\sum_{l\in A_k }\log\frac{k+1}{k}\log\frac{l+1}{l}\left(\frac{k}{l}\right)^{1/2}&\leq \sum_{k=1}^n\sum_{l\in A_k }\log\frac{k+1}{k}\log\frac{l+1}{l}\\
			&\leq \sum_{k=1}^n\log\frac{k+1}{k}\sum_{l\in A_k }\log\frac{l+1}{l}\leq 2\log (n+1 )\log \log n.
		\end{align*}
		For the indices $l\in B_k$, $k/l<1/ \log n$. Combining this with \eqref{easy_ineq2} where $d_k=\log \frac{k+1}{k}$ gives
		\begin{align*}
			\sum_{k=1}^n\sum_{l\in B_k }\log\frac{k+1}{k}\log\frac{l+1}{l}\left(\frac{k}{l}\right)^{1/2}&\leq \frac{1}{(\log n)^{1/2}}\sum_{k=1}^n\sum_{l\in B_k }\log\frac{k+1}{k}\log\frac{l+1}{l}\\
			&\leq \frac{1}{(\log n)^{1/2}}\sum_{1\leq k\leq l\leq n} \log\frac{k+1}{k}\log\frac{l+1}{l}\\
			&\leq \frac{(\log n+1)^2}{(\log n)^{1/2}}.
		\end{align*}
		Combining the two expressions, we finally obtain for some $C>0$
		\begin{align*}
			E[T_n^2]&=\frac{C}{(\log(n+1))^2}\left[\sum_{k=1}^n\sum_{l\in A_k }\log\frac{k+1}{k}\log\frac{l+1}{l}\left(\frac{k}{l}\right)^{1/2}+\sum_{k=1}^n\sum_{l\in B_k }\log\frac{k+1}{k}\log\frac{l+1}{l}\left(\frac{k}{l}\right)^{1/2}\right]\\
			&\leq C\left[\frac{\log \log n}{\log n+1} + \frac{1}{(\log n)^{1/2}} \right]\leq C' \frac{\log \log n}{\log n},
		\end{align*}
		which completes the proof.
	\end{proof}
	
	~
	
	\textbf{Acknowledgement} We would like to thank Edward De Brouwer for his contributions in the early parts of the research leading up to this paper. 
	
	~
	
	This research received funding from:
	\begin{itemize}[noitemsep]
		\item Research Council KU Leuven: C14/18/092 SymBioSys3; CELSA-HIDUCTION CELSA/17/032
		\item Flemish Government:
		\begin{itemize}[noitemsep]
			\item IWT: Exaptation, PhD grants
			\item FWO 06260 (Iterative and multi-level methods for Bayesian multirelational factorization with features); Elixir I002819N
			\item This research received funding from the Flemish Government (AI Research Program).
			\item VLAIO PM: Augmanting Therapeutic Effectiveness through Novel Analytics
		\end{itemize}
		\item EU: "MELLODDY" This project has received funding from the Innovative Medicines Initiative 2 Joint Undertaking under grant agreement No 831472. This Joint Undertaking receives support from the European Union’s Horizon 2020 research and innovation programme and EFPIA.
	\end{itemize}

\end{document}